\documentclass[12pt,reqno]{article}

\usepackage[usenames]{color}
\usepackage{amssymb}
\usepackage{amsmath}
\usepackage{amsthm}
\usepackage{amsfonts}
\usepackage{amscd}
\usepackage{graphicx}

\usepackage[colorlinks=true,
linkcolor=webgreen,
filecolor=webbrown,
citecolor=webgreen]{hyperref}

\definecolor{webgreen}{rgb}{0,.5,0}
\definecolor{webbrown}{rgb}{.6,0,0}

\usepackage{color}
\usepackage{fullpage}
\usepackage{float}

\usepackage{graphics}
\usepackage{latexsym}
\usepackage{epsf}
\usepackage{breakurl}

\setlength{\textwidth}{6.5in}
\setlength{\oddsidemargin}{.1in}
\setlength{\evensidemargin}{.1in}
\setlength{\topmargin}{-.1in}
\setlength{\textheight}{8.4in}

\newcommand{\seqnum}[1]{\href{https://oeis.org/#1}{\rm \underline{#1}}}
\def\Enn{\mathbb{N}}

\begin{document}

\theoremstyle{plain}
\newtheorem{theorem}{Theorem}
\newtheorem{corollary}[theorem]{Corollary}
\newtheorem{lemma}[theorem]{Lemma}
\newtheorem{proposition}[theorem]{Proposition}

\theoremstyle{definition}
\newtheorem{definition}[theorem]{Definition}
\newtheorem{example}[theorem]{Example}
\newtheorem{conjecture}[theorem]{Conjecture}

\theoremstyle{remark}
\newtheorem{remark}[theorem]{Remark}

\title{Cloitre's Self-Generating Sequence}

\author{Jeffrey Shallit\footnote{Research supported by a grant from NSERC,
2024-03725.}\\
School of Computer Science\\
University of Waterloo\\
Waterloo, ON N2L 3G1 \\
Canada\\
\href{mailto:shallit@uwaterloo.ca}{\tt shallit@uwaterloo.ca}}

\maketitle

\begin{abstract}
In 2009 Benoit Cloitre introduced a certain
self-generating sequence 
$$(a_n)_{n\geq 1} = 1, 1, 2, 1, 1, 1, 1, 2, 1, 1, 2, 1, 1, 2, 2, \ldots,$$
with the property that the sum of the terms appearing in the $n$'th run
equals twice the $n$'th term of the sequence.  We give a connection between
this sequence and the paperfolding sequence, and then prove Cloitre's
conjecture about the density of $1$'s appearing in $(a_n)_{n \geq 1}$.
\end{abstract}

\section{Introduction}

Recall that a {\it run\/} in a sequence is a maximal block of consecutive
identical values \cite{Golomb:1966}.
The starting point for this paper was a 2009 conjecture by Benoit Cloitre
that appears in the entry for sequence \seqnum{A157196} in the
On-Line Encyclopedia of Integer Sequences (OEIS) \cite{oeis}.  Paraphrased,
his conjecture reads as follows.
\begin{conjecture}
Let $(a_n)_{n \geq 1}$ be the unique sequence 
$$\underbrace{1,1,}_2 \, \underbrace{2,}_2 \, \underbrace{1,1,1,1,}_4 \,
\underbrace{2,}_2 \, \underbrace{1,1,}_2 \, \underbrace{2,}_2 \,
\underbrace{1,1,}_2 \,
\underbrace{2,2,}_4 \, \underbrace{1,1,}_2 \,
\underbrace{2,}_2 \, \underbrace{1,1,1,1,}_4 \ldots$$ 
over the alphabet 
$\{1,2 \}$, beginning with $a_1=1$, with the property that the sums
of the elements appearing in the $n$'th run equals twice
the sequence itself.  (The sums of the runs appear underneath the sequence.)

Then the number of $1$'s appearing
in the prefix $a_1 a_2 \cdots a_n$ is $2n/3 + o(n)$.
\end{conjecture}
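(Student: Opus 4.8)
The plan is to exploit the self-generating structure directly, using the connection to the paperfolding sequence only to supply the one genuinely analytic ingredient, namely the existence of the limiting density. First I would record the run-structure facts forced by the defining property. Because runs are maximal blocks and the first run is a block of $1$'s, the $n$-th run $R_n$ consists of $1$'s for $n$ odd and of $2$'s for $n$ even; since its digit-sum is $2a_n$, we have $R_n = 1^{2a_n}$ when $n$ is odd and $R_n = 2^{a_n}$ when $n$ is even. Writing $L_k$ for the ending position of the $k$-th run, the self-generating property is the exact identity $a_1 a_2 \cdots a_{L_k} = R_1 R_2 \cdots R_k$. Let $c(m)$ and $v(m) = m - c(m)$ count the $1$'s and the $2$'s in $a_1 \cdots a_m$. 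Reading off the symbols of the first $k$ runs gives the bookkeeping identities
\[
c(L_k) = 2\sum_{\substack{j \le k \\ j \text{ odd}}} a_j, \qquad v(L_k) = \sum_{\substack{j \le k \\ j \text{ even}}} a_j ,
\]
which drive the whole argument.

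Next I would analyze the two interleaved subsequences of $(a_n)$. The odd-indexed subsequence $a_1, a_3, a_5, \ldots$ appears to be purely periodic with period $(1,2,1,1,1,2)$; I would establish this (either by induction on the run structure, or as a by-product of the automatic description below), obtaining $\sum_{j \le k,\, j \text{ odd}} a_j = \tfrac23 k + O(1)$ and, writing $v_{\mathrm{odd}}(k)$ for the number of $2$'s among the odd-indexed $a_j$ with $j \le k$, the estimate $v_{\mathrm{odd}}(k) = \tfrac16 k + O(1)$. All the complexity sits in the even-indexed subsequence, and this is exactly where the paperfolding sequence enters: following the guess-and-verify philosophy, I would identify $(a_n)$ as a finite-state transform of the regular paperfolding sequence — that is, exhibit $(a_n)$ as a $2$-automatic sequence — and then verify the identification against the \emph{unique} sequence satisfying Cloitre's defining property. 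The payoff I need is soft but decisive: the periodic odd part and the paperfolding-governed even part each carry well-defined symbol frequencies, and the explicit finite-state relationship lets them be combined, so that $\rho = \lim_{m\to\infty} c(m)/m$ exists. Existence is all the conjecture asks for, since it requests only an error term of size $o(n)$.

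Granting that $\rho$ exists, its value is then forced by the identities above together with the periodicity, with no further input. Setting $v_{\mathrm{even}}(k) = v(k) - v_{\mathrm{odd}}(k) = (1-\rho)k - \tfrac16 k + o(k)$, the second identity gives $v(L_k) = \tfrac12 k + v_{\mathrm{even}}(k) + O(1) = (\tfrac43 - \rho)k + o(k)$, while the first gives $c(L_k) = \tfrac43 k + O(1)$ and hence $L_k = (\tfrac83 - \rho)k + o(k)$. Feeding these into the density relation $v(L_k) = (1-\rho)L_k + o(L_k)$ yields
\[
(1-\rho)\Bigl(\tfrac83 - \rho\Bigr) = \tfrac43 - \rho ,
\]
i.e.\ $3\rho^2 - 8\rho + 4 = 0$, whose only root in $[0,1]$ is $\rho = \tfrac23$. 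Since a prefix $a_1 \cdots a_n$ that stops inside a run differs from a whole number of runs by only $O(1)$ symbols, the estimate transfers from the subsequence $(L_k)$ to all $n$, giving $c(n) = \tfrac23 n + o(n)$, as claimed.

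The main obstacle is the middle step: establishing the paperfolding / $2$-automatic description of $(a_n)$. The difficulty is intrinsic to self-generating sequences — the run lengths take the non-uniform values $1, 2, 4$ and are dictated by the sequence itself, so the map sending a position $n$ to the run containing it, and to the value $a_j$ controlling that run, is governed by $(a_n)$ in a manner that is far from obviously finite-state. Guessing the correct automaton and then discharging the verification that it reproduces Cloitre's unique sequence is the heart of the matter. By contrast, once the existence of the density is secured, the periodicity of the odd subsequence together with the elementary identities makes the evaluation $\rho = \tfrac23$ essentially mechanical.
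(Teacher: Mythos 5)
Your overall architecture rests on the same pillar as the paper's proof: both need the guess-and-verify identification of $(a_n)$ as a $2$-automatic sequence derived from the regular paperfolding sequence, and you rightly flag this as the heart of the matter (and leave it unexecuted, which is fair for a proposal). Where you diverge is the endgame. The paper, once it has a DFAO for $(a_n)$, guesses a synchronized automaton for the counting function $g_n$ and has {\tt Walnut} verify the sharp inequality $0 \leq 3g_n - 2n \leq 4$, which yields the density $2/3$ with a bounded error term. You instead propose to bootstrap the value of the density $\rho$ from the self-generating identities together with the (empirically correct, but currently only observed) periodicity of the odd-indexed subsequence with period $(1,2,1,1,1,2)$. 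Your bookkeeping is right: granting that $\rho$ exists, the identities do force $3\rho^2 - 8\rho + 4 = 0$ and hence $\rho = 2/3$.

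The genuine gap is the existence of $\rho$. You treat it as a ``soft but decisive'' consequence of the finite-state description, but letter frequencies of $2$-automatic sequences need not exist in general, so ``$(a_n)$ is automatic'' does not by itself deliver $\lim_{m\to\infty} c(m)/m$; and the even-indexed subsequence, being obtained from the paperfolding sequence through a run-length encoding (a non-uniform time change), does not inherit frequency existence from unique ergodicity of the paperfolding system without a separate argument. Moreover your bootstrapping genuinely needs the two-sided limit: the equation $(1-\rho)\bigl(\tfrac83-\rho\bigr)=\tfrac43-\rho$ is obtained by evaluating the density once along $k$ and once along $L_k$, and with only $\limsup$ and $\liminf$ these evaluations decouple, so the quadratic no longer pins down a unique value. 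Thus the one analytic ingredient you outsource is precisely the content of the conjecture. To close the gap you would have to extract the frequency from the automaton anyway (e.g., via primitivity of an underlying morphism, or by a direct synchronized-automaton computation of $g_n$), at which point the paper's route --- which proves the stronger statement $g_n = 2n/3 + O(1)$ in one verification --- is both shorter and sharper. The periodicity of $(a_{2k-1})_{k\geq 1}$ is a smaller issue: it appears to be true and is decidable once the automaton for $(a_n)$ is in hand, but it too must be proved rather than asserted.
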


For example, $a_1 = 1$, so the first run must have two elements, both
of which are $1$'s, so the sequence begins $1,1$.  The next term
then is forced to be a single $2$.  This forces the next four elements
to be $1$, and so forth.

Cloitre's sequence is reminiscent of another sequence, the celebrated
Oldenburger-Kolakoski sequence 
$$ {\bf k} := 1, 2, 2, 1, 1, 2, 1, 2, 2, 1, 2, 2, 1, 1, 2, 1, 1, 2, 2, 1, 2, 1, 1,\ldots$$
defined by
the property that the $n$'th term of the sequence is the length
of the $n$'th run.  See, for example,
\cite{Oldenburger:1939,Kolakoski:1965}.
The properties of $\bf k$ are still quite mysterious; in particular,
understanding the asymptotic behavior of
the number of $1$'s in a prefix
of length $n$ seems hard.  It seems reasonable to conjecture
that this number is $n/2 + o(n)$, but this is
not currently known.

In this paper we prove Cloitre's conjecture about the sequence 
$(a_n)_{n\geq 1}$.
This sequence is somehow much more tractable than the Oldenburger-Kolakoski 
sequence, because it is related
to the binary expansion of $n$ in a rather subtle way.
In this, Cloitre's sequence $(a_n)_{n \geq 1}$ resembles another Kolakoski-like
sequence more than the Oldenburger-Kolakoski sequence itself:  namely,
$(a_n)_{n \geq 1}$ resembles
the Dekking sequence $1,3,3,3,1,1,1,3,3,3,\ldots$ \cite{Dekking:1997},
in which we also have that the sequence of run lengths gives the
sequence itself.  Dekking's sequence is more tractable to understand, as it
is the image of a fixed point of a morphism.

Along the way we will encounter many interesting sequences, all of
which can be computed by finite automata, and these finite automata
can be turned into proofs of our assertions.

\subsection{Finite automata}

We assume the reader has some basic familiarity with finite automata,
as discussed, for example, in \cite{Hopcroft&Ullman:1979}.  
Each finite automaton has a number of states, depicted as circles,
and labeled transitions between them, depicted as arrows.  A circle
with a headless arrow entering denotes the (unique) initial state,
and a double circle denotes an accepting or final state.  
The first number inside a circle is the state number, and the second
number (if it is there) denotes the output associated with that state.

Recall that a state $q$ of an automaton is called {\it dead\/} if,
starting at $q$, no accepting state can be reached.
To save space, in this paper
some automata are displayed without the dead state, nor transitions
into or out of the dead state.

\section{Paperfolding sequences}

As it turns out, Cloitre's sequence is related to the regular paperfolding
sequence.  In this section we introduce this sequence as a member of
a larger class of similar sequences.

Paperfolding sequences are sequences over 
$\{ -1, 1\}$ arising from iterated folding of a piece of paper, where one
can introduce a hill ($+1$) or valley ($-1$) at each fold.  
See, for example, \cite{Davis&Knuth:1970,Dekking&MendesFrance&vanderPoorten:1982}.

Formally, let $\bf f$ be a finite sequence over $\{ -1, 1 \}$, and define
\begin{align}
P_\epsilon &= \epsilon \nonumber\\
P_{{\bf f} a} &= (P_{\bf f}) \  a \ ({-P_{{\bf f}}^R}) \label{fund}
\end{align}
for $a \in \{ -1, 1\}$ and ${\bf f} \in \{-1, 1\}^*$.
Notation:  $\epsilon$ denotes the empty sequence of length $0$,
$-x$ changes the sign of each element of a sequence $x$, and
$x^R$ reverses the order of symbols in a sequence $x$.

Now let ${\bf f} = f_0 f_1 f_2 \cdots$ be an infinite sequence in
$\{-1, 1\}^\omega$.  It is easy to see that 
$P_{f_0 f_1 \cdots f_n}$ is a prefix of
$P_{f_0 f_1 \cdots f_{n+1}}$ for all $n \geq 0$, so there is a unique
infinite sequence of which all the $P_{f_0 f_1 \cdots f_n}$
are prefixes; we call this infinite sequence $P_{\bf f}$.

We index the unfolding instructions starting
at $0$:  ${\bf f} = f_0 f_1 f_2 \cdots$, while the
paperfolding sequence itself is indexed starting at $1$:
$P_{\bf f} = p_1 p_2 p_3 \cdots$.
Then clearly $P_{\bf f} [2^n] = p_{2^n} = f_n$
for $n \geq 0$.  Hence there are
uncountably many infinite paperfolding sequences.

The most famous such sequence is the
{\it regular paperfolding sequence}, corresponding to the
sequence of unfolding instructions $1^\omega = 111\cdots$.  
For example
\begin{align*}
P_1 &= 1 \\
P_{11} &= 1 \, 1 \, (-1) \\
P_{111} &= 1 \, 1 \, (-1) \, 1 \, 1 \, (-1) \, (-1) .
\end{align*}
These are all prefixes of the infinite paperfolding sequence
$(p_n)_{n \geq 1} = 1,1,-1,1,1,-1,-1,\ldots$.  

In this paper we are mostly concerned with the regular paperfolding
sequence, although there are extensions to all paperfolding sequences
discussed in Section~\ref{further}.

\section{The sequences}

In this section we introduce the sequences we will study:
\begin{itemize}
\item the regular paperfolding sequence $(p_n)$ defined above.

\item $(q_n)_{n \geq 1}$, a version of the paperfolding sequence defined over
the alphabet $\{0,1\}$, and satisfying $p_n = (-1)^{q_n}$.

\item the first difference sequence $(d_n)_{n \geq 1}$ of the sequence
$(q_n)_{n \geq 1}$,
defined by $d_1 = 1$ and $d_n = q_n - q_{n-1}$ for $n \geq 2$.

\item the absolute value of the sequence $(d_n)_{n \geq 1}$, which
we denote by $(d'_n)_{n \geq 1}$.

\item the ending positions of the runs of $d'_n$, which we denote
by $e'_n$.  It is useful to set $e'_0 = 0$.

\item the starting positions of the runs of $d'_n$, which
we denote by $s'_n$.

\item the run lengths of the sequence $(d'_n)$, which we denote
by $(b_n)_{n \geq 1}$.  We will see later that $b_n = a_n$, which
will give a connection between the paperfolding sequence and
Cloitre's sequence.

\item the successive
ending positions $e_n$ of the successive runs appearing in the sequence
$(b_n)$, and

\item the starting positions $s_n$ of these runs.

\item the run lengths $r_n$ of the runs appearing in $(b_n)_{n \geq 1}$.

\item the sums of the elements appearing in the $n$'th run
of $(b_n)$, which we denote by $\sigma_n$.  

\item the number $g_n$ of $1$'s appearing in the prefix $b_1 \cdots b_n$.

\item the sequence $h_n = 3g_n - 2n$.
\end{itemize}
The first few values of each of these sequences are given in Table~\ref{tab1}.
\begin{table}[H]
\begin{center}
\begin{tabular}{c|ccccccccccccccccccc}
$n$ & 1 & 2 &  3 & 4 & 5 & 6 & 7 & 8 & 9 & 10 & 11 & 12 & 13 & 14 & 15 & OEIS \\
\hline
$a_n$ & 1&1&2&1&1&1&1&2&1&1&2&1&1&2&2& \seqnum{A157196} \\
$p_n$ & 1& 1&$-1$& 1& 1&$-1$&$-1$& 1& 1& 1&$-1$&$-1$& 1&$-1$&$-1$& \seqnum{A034947}\\
$q_n$ & 0&0&1&0&0&1&1&0&0&0&1&1&0&1&1& \seqnum{A014707}\\ 
$d_n$ & 1& 0& 1&$-1$& 0& 1& 0&$-1$& 0& 0& 1& 0&$-1$& 1& 0&\\
$d'_n$ & 1&0&1&1&0&1&0&1&0&0&1&0&1&1&0& \seqnum{A379728}\\
$e'_n$ & 1& 2& 4& 5& 6& 7& 8&10&11&12&14&15&16&18&20&\\ 
$s'_n$ & 1& 2& 3& 5& 6& 7& 8& 9&11&12&13&15&16&17&19&\\
$b_n$ & 1&1&2&1&1&1&1&2&1&1&2&1&1&2&2& \seqnum{A157196} \\
$e_n$ & 2& 3& 7& 8&10&11&13&15&17&18&22&23&25&27&31& \seqnum{A379729} \\ 
$s_n$ & 1& 3& 4& 8& 9&11&12&14&16&18&19&23&24&26&28&\\
$r_n$ & 2&1&4&1&2&1&2&2&2&1&4&1&2&2&4& \seqnum{A379704}\\
$\sigma_n$ & 2&2&4&2&2&2&2&4&2&2&4&2&2&4&4&\\
$g_n$ & 1& 2& 2& 3& 4& 5& 6& 6& 7& 8& 8& 9&10&10&10&\\ 
$h_n$ & 1&2&0&1&2&3&4&2&3&4&2&3&4&2&0&
\end{tabular}
\end{center}
\caption{The sequences we study in this paper.}
\label{tab1}
\end{table}

\section{The connection between paperfolding and Cloitre's sequence}

We use the free software {\tt Walnut}
to help with proofs \cite{Mousavi:2016,Shallit:2023}. Given a first-order
formula about integers involving addition, logical operations, finite
automata, and the universal and existential quantifiers, this software
can provide rigorous proofs or disproofs of assertions.

The idea is that each of the
sequences we study is, {\it mirabile dictu},
computable using a finite automaton.  There are two different
ways to be so computable, depending on the type of sequence.
If the sequence $z_n$ takes integer
values, then we can use a {\it synchronized automaton} 
\cite{Shallit:2021}.
Such an automaton accepts two inputs in parallel, $n$ and $z$, and accepts
if and only if $z = z_n$.  If the sequence takes only finitely many integer
values, then we can also use a {\it deterministic finite automaton with
output} (DFAO), where an output is associated with the last state reached
\cite{Allouche&Shallit:2003}.  In all of the automata in this paper,
the inputs are expressed in base $2$,
{\it starting with the least
significant bit}.

We start with the regular paperfolding sequence $q_n$ over the alphabet
$\lbrace 0, 1 \rbrace$.  We index it, like all of our sequences, starting
with $n =1$.  However, it is useful to define $q_0 = 0$.
With this definition, the sequence obeys the 
recurrence $q_{2n} = q_n$, $q_{4n+1} = 0$, $q_{4n+3} = 1$ for $n \geq 0$,
as is well known \cite{Dekking&MendesFrance&vanderPoorten:1982}.
Also in \cite{Dekking&MendesFrance&vanderPoorten:1982} one
can find the $4$-state 
DFAO for it, illustrated in Figure~\ref{fig1}, and called
{\tt Q}.
We can check that it obeys the definition as follows:
\begin{verbatim}
eval q_test "?lsd_2 An (Q[2*n]=Q[n] & Q[4*n+1]=@0 & Q[4*n+3]=@1)":
\end{verbatim}
and {\tt Walnut} returns {\tt TRUE}.
\begin{figure}[htb]
\begin{center}
\vspace{-.4in}
\includegraphics[width=3.5in]{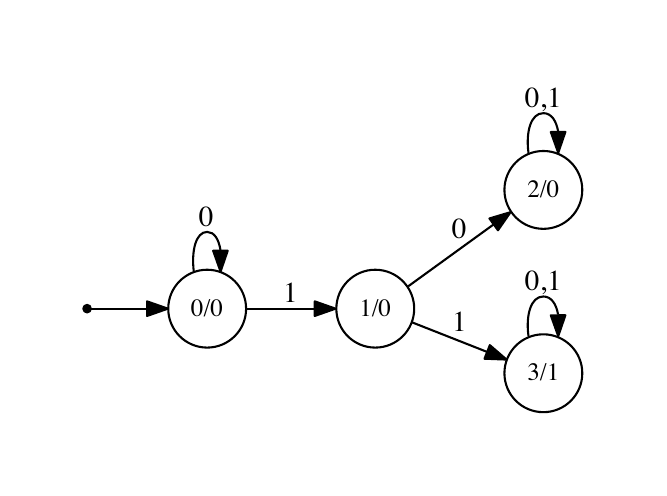}
\vspace{-.4in}
\end{center}
\caption{The lsd-automaton for $(q_n)$.}
\label{fig1}
\end{figure}

Now that we are giving {\tt Walnut} commands, it is time to briefly
explain {\tt Walnut}'s syntax.  
The {\tt Walnut} commands {\tt def} and {\tt reg}
create automata that can be used in later calculations.
The command {\tt eval} returns a result
{\tt TRUE} or {\tt FALSE}.  The jargon {\tt ?lsd\_2} indicates
that integers are represented in base $2$ in least-significant-digit
first format.  The symbols {\tt \&}, {\tt |}, {\tt =>}, {\tt <=>}
represent logical {\tt AND}, {\tt OR}, implication, and {\tt IFF},
respectively.  The symbol {\tt A} represents the universal quantifier
$\forall$ and {\tt E} represents the existential quantifier $\exists$.
The {\tt @} sign precedes a numerical constant that can be the output
of an automaton.  The {\tt combine} command allows one to take a number
of finite automata and turn them into a DFAO with different prescribed
outputs.  The {\tt minimize} command allows us to find an equivalent
automaton with the smallest possible number of states.

Now let us create the automaton for $d_n$, the first difference of
the paperfolding sequence.  It seems natural to define this using
{\tt Q} and subtraction, but since the default domain for numbers
is $\Enn$, the non-negative integers, we must use a trick and substitute
some suitable value, such as $2$, for $-1$, and then replace it later.
We first create a synchronized automaton for this, and then convert it
into a DFAO.
\begin{verbatim}
def synchd "?lsd_2 (n=0 & z=0) | (n=1 & z=1) | 
   (z=1 & Q[n]=@1 & Q[n-1]=@0) | (n>=2 & z=0 & Q[n]=Q[n-1]) |
   (z=2 & Q[n]=@0 & Q[n-1]=@1)":
def d0 "?lsd_2 $synchd(n,0)":
def d1 "?lsd_2 $synchd(n,1)":
def d2 "?lsd_2 $synchd(n,2)":
combine D d0=0 d1=1 d2=-1:
\end{verbatim}
This $12$-state automaton, called {\tt D}, is displayed in Figure~\ref{fig2}.
\begin{figure}[htb]
\begin{center}
\vspace{-.2in}
\includegraphics[width=5in]{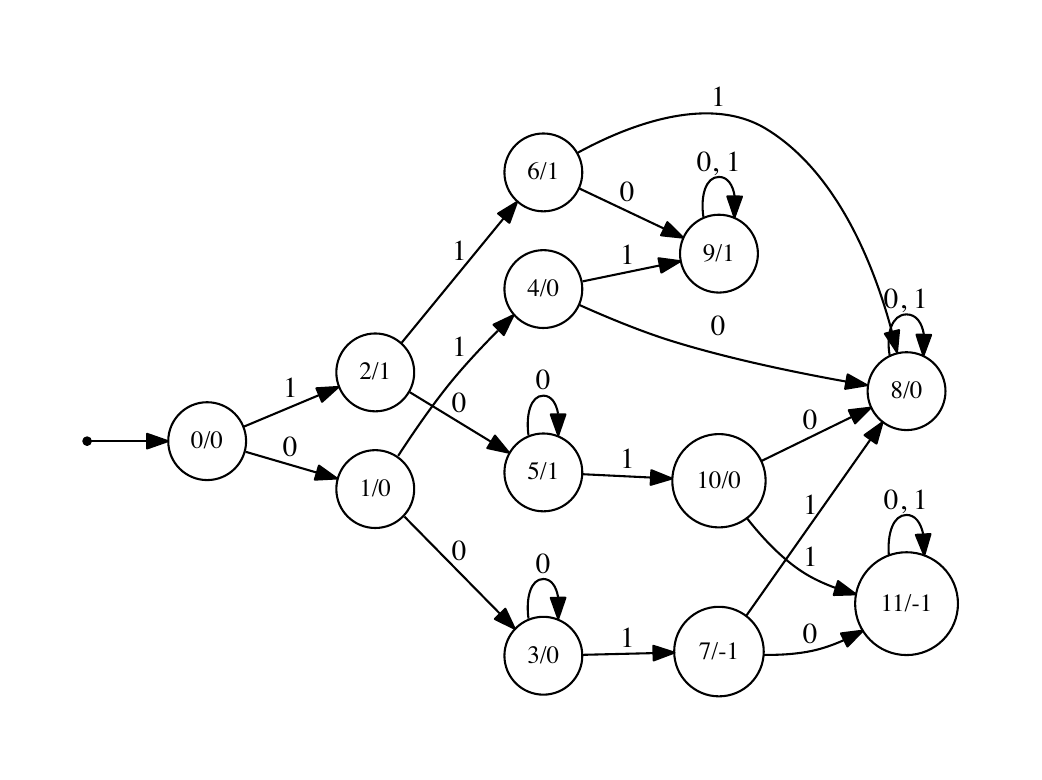}
\vspace{-.4in}
\end{center}
\caption{The lsd-automaton {\tt D} for $(d_n)$.}
\label{fig2}
\end{figure}

Next we create the DFAO {\tt DP}  for $(d'_n)$.
\begin{verbatim}
def dp12 "?lsd_2 $synchd(n,1) | $synchd(n,2)":
combine DPP dp0=0 dp12=1:
minimize DP DPP:
\end{verbatim}
This $9$-state automaton, called {\tt DP}, is displayed in Figure~\ref{fig3}.
\begin{figure}[htb]
\begin{center}
\includegraphics[width=5in]{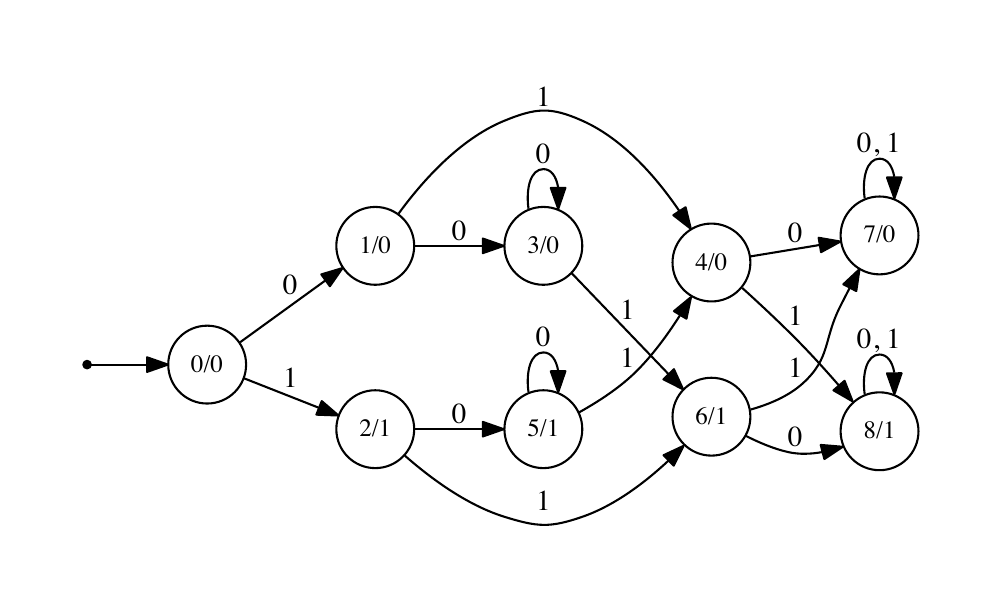}
\vspace{-.4in}
\end{center}
\caption{The lsd-automaton {\tt DP} for $(d'_n)$.}
\label{fig3}
\end{figure}

Next we turn to creating a synchronized
automaton {\tt ep} for $(e'_n)$, the ending positions
of the runs of $(d'_n)$. In general, there is no
guarantee that an automaton for these ending positions 
for an automatic sequence
will exist, so it seems difficult to get an automaton for $(e'_n)$
through some sort of deterministic procedure.
Instead, we use a tactic discussed
before in \cite{Shallit:2023}: we {\it guess\/} the automaton from empirical data,
and then, once guessed, we can verify its correctness {\it rigorously}
through {\tt Walnut}.  The guessed automaton, which has 19 states,
is displayed in
Figure~\ref{fig4}.
\begin{figure}[htb]
\begin{center}
\includegraphics[width=6.5in]{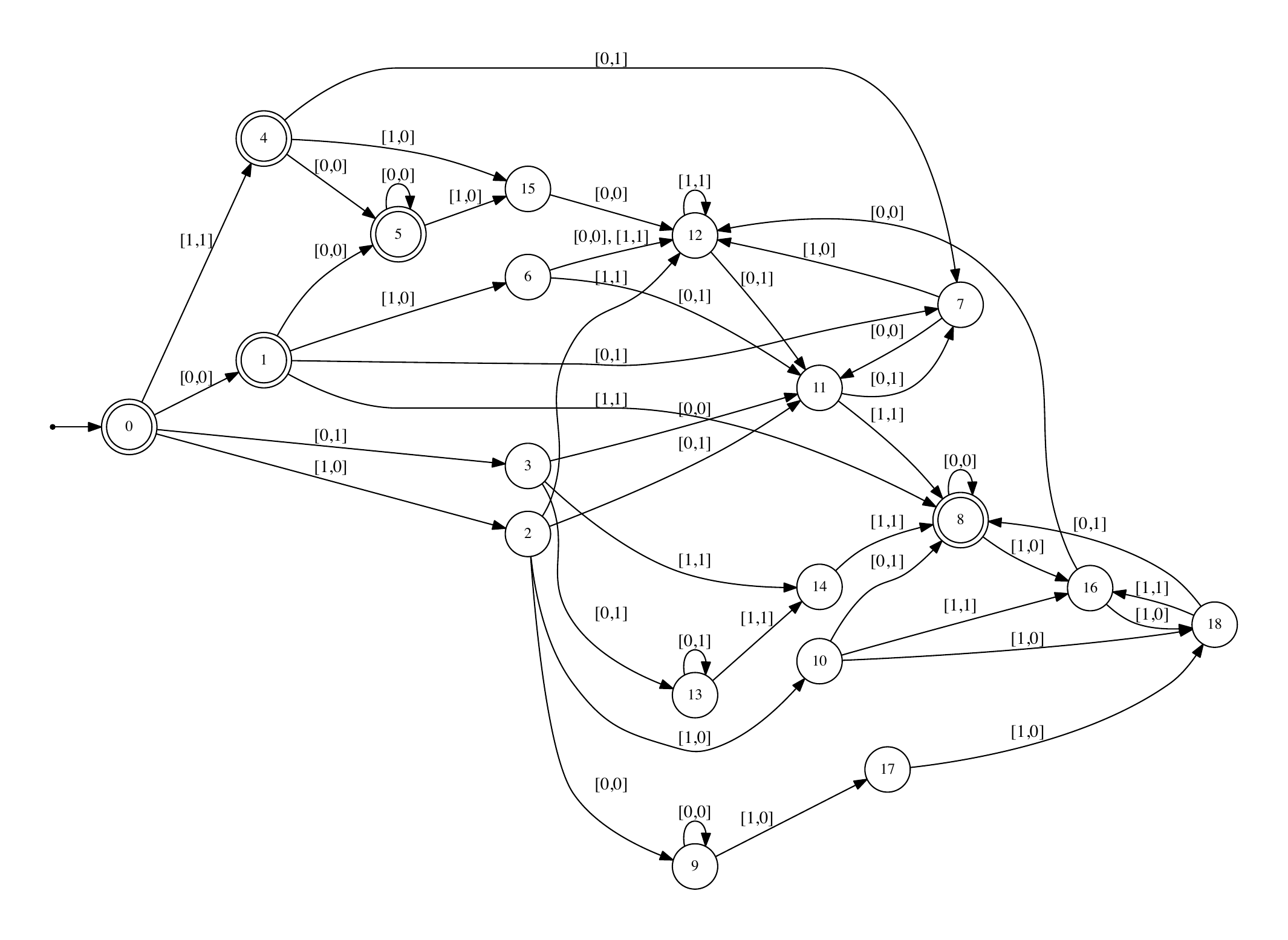}
\vspace{-.4in}
\end{center}
\caption{The lsd-automaton {\tt ep} for $(e'_n)$.}
\label{fig4}
\end{figure}

To verify its correctness, we use {\tt Walnut} to
check that the automaton {\tt ep} really
computes a strictly increasing
function $e''_n$, that the symbol of $d'$ at position $e''_n$ differs
from that at position $e''_n + 1$, that all the symbols
from position $e''_{n-1} +1$ to $e''_n - 1$ are the same, and
that $e''_1 = 1$.  
We do this as follows:
\begin{verbatim}
eval ep1 "?lsd_2 An Ex $ep(n,x)":
# there is a value for each non-negative integer n
eval ep2 "?lsd_2 ~En,x,y x!=y & $ep(n,x) & $ep(n,y)":
# there is no n for which ep takes two distinct values
eval ep3 "?lsd_2 An,x,y ($ep(n,x) & $ep(n+1,y)) => x<y":
# the function ep computes is strictly increasing
eval ep4 "?lsd_2 An,x $ep(n,x) => DP[x]!=DP[x+1]":
# the symbol of d'_n at position e'_n differs from that at position e'_n + 1
eval ep5 "?lsd_2 An,x,y,t ($ep(n-1,x) & $ep(n,y) & t>=x+2 & t<y) => 
  DP[t]=DP[x+1]":
# all the symbols from e'_{n-1}+1 to e'_n - 1 are the same
eval ep6 "?lsd_2 $ep(1,1)":
# e'_1 = 1
\end{verbatim}
All of these return {\tt TRUE}.
Now an easy induction proves that $e'_n = e''_n$.
The same trick appeared in a recent paper \cite{Shallit:2024}.
%Since the ideas and the {\tt Walnut} code for this have recently
%appeared in a related paper \cite{Shallit:2024}, we omit the details.

Once we have the automaton {\tt ep} for $e'_n$, we can trivially
obtain the $20$-state
automaton {\tt sp} for $s'_n$, the starting positions of runs of
$d'_n$, as follows:
\begin{verbatim}
def sp "?lsd_2 (n=0 & z=0) | $ep(n-1,z-1)":
\end{verbatim}
And once we have the automaton for $s'_n$ and $e'_n$, we can
obtain a $32$-state synchronized automaton {\tt b} for $b_n$:
\begin{verbatim}
def b "?lsd_2 Ex,y $sp(n,x) & $ep(n,y) & z=(y-x)+1":
\end{verbatim}
We can now check that $b_n$ is either $1$ or $2$.
\begin{verbatim}
eval b12 "?lsd_2 An (n>=1) => ($b(n,1) | $b(n,2))":
\end{verbatim}
which evaluates to {\tt TRUE}.
Finally, we can get a DFAO {\tt B} for $b_n$ as follows:
\begin{verbatim}
def b1 "?lsd_2 $b(n,1)":
def b2 "?lsd_2 $b(n,2)":
combine B b1=1 b2=2:
\end{verbatim}
This $22$-state automaton is displayed in Figure~\ref{fig5}.
\begin{figure}[htb]
\begin{center}
\includegraphics[width=6in]{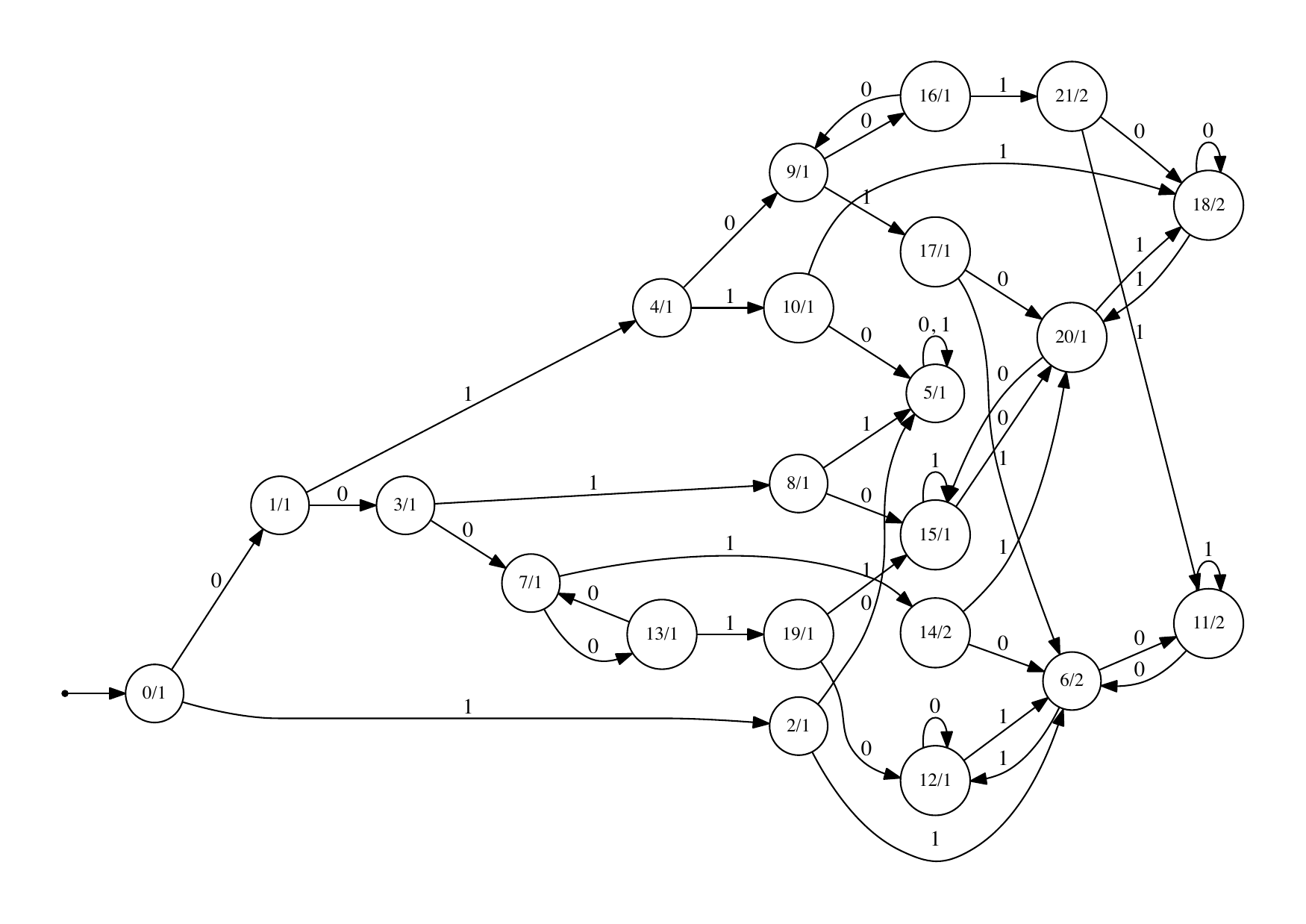}
\end{center}
\caption{The lsd-automaton {\tt B} for $(b_n)$.}
\label{fig5}
\end{figure}
Notice that from the way we have defined the sequences, it follows
immediately that that $e'_n = \sum_{1 \leq i \leq n} b_i$.

It's now time to verify that the sum of the runs appearing in
$b_n$ have the desired property.  To do that we first need {\tt e}, a
synchronized automaton for $e_n$,
the ending position of runs in $b_n$.   We adopt the same strategy
as before:  guess the automaton and then verify it is correct.
\begin{figure}[htb]
\begin{center}
\includegraphics[width=6.5in]{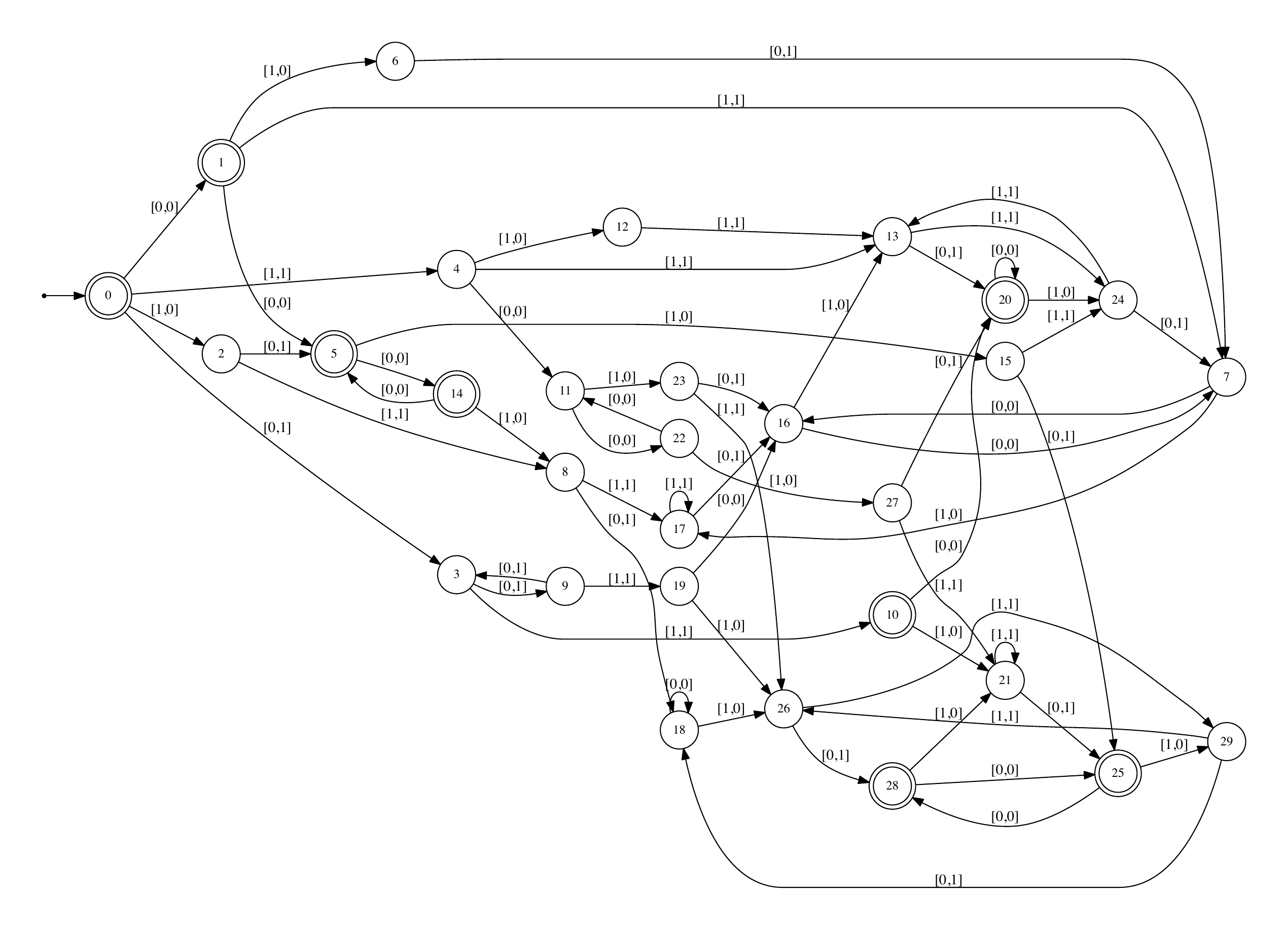}
\end{center}
\caption{The synchronized automaton for $(e_n)$.}
\label{fig7}
\end{figure}
It has $30$ states.
We can verify the correctness of the automaton {\tt e} in exactly
the same way that we did for {\tt ep} above.   We omit the details.

We can now compute the starting positions of runs in much the same
way as we did for {\tt sp} above.  The following command creates
a $32$-state automaton {\tt s}.
\begin{verbatim}
def s "?lsd_2 (n=0 & z=0) | $e(n-1,z-1)":
\end{verbatim}

We can now compute the lengths of the runs $r_n$ in exactly the same
way we did for $(b_n)$ above.
\begin{verbatim}
def r "?lsd_2 Ex,y $s(n,x) & $e(n,y) & z=(y-x)+1":
\end{verbatim}
This gives us a $32$-state synchronized automaton {\tt r}.
We observe that the only runs in $(b_n)$ are of length $1,2,$ or $4$.
\begin{verbatim}
def r_check "?lsd_2 An,x $r(n,x) => (x=1 | x=2 | x=4)":
\end{verbatim}
And {\tt Walnut} returns {\tt TRUE}.
Notice that, from the way we have defined the sequences, it follows
immediately that $e_n = \sum_{1 \leq i \leq n} r_i$.

Next, we can compute a synchronized automaton {\tt sigma} for the sum of the
values of each run, using the fact that the runs of $1$'s and $2$'s
alternate.
\begin{verbatim}
reg even lsd_2 "()|0(0|1)*":
reg odd lsd_2 "1(0|1)*":
def sigma "?lsd_2 Ex,y ($odd(n) & $s(n,x) & $e(n,y) & z=(y-x)+1) | 
   ($even(n) & $s(n,x) & $e(n,y) & z=2*((y-x)+1))":
\end{verbatim}
This automaton has $31$ states.

Finally, let us check that $\sigma_n = 2 b_n$:
\begin{verbatim}
eval b_property "?lsd_2 An,x,y ($b(n,x) & $sigma(n,y)) => y=2*x":
\end{verbatim}
And {\tt Walnut} returns {\tt TRUE}.

At this point we now know that indeed $a_n = b_n$; that is, the
sequence $b_n$ we constructed starting from the regular paperfolding
sequence is indeed Cloitre's sequence $a_n$.

\section{The proof of Cloitre's conjecture}

We can now proceed to the proof of our main result, which resolves
Cloitre's conjecture:
\begin{theorem}
Let $g_n$ be the number of $1$'s in the sequence $a_1 a_2 \cdots a_n$.
Then 
\begin{equation}
0 \leq 3g_n -2n \leq 4
\label{star}
\end{equation}
for all $n$, and hence
$\lim_{n \rightarrow \infty} g_n/n = 2/3$.
\label{thm1}
\end{theorem}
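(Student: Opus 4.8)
The plan is to convert the seemingly hard counting quantity $g_n$ into a statement about $e'_n$, for which we already have a verified synchronized automaton, and then let \texttt{Walnut} discharge the double inequality \eqref{star} directly. The crucial observation is an exact identity relating $g_n$ and $e'_n$. Since every $b_i$ lies in $\{1,2\}$ and $b_n = a_n$, a prefix $b_1 \cdots b_n$ with $g_n$ ones contains exactly $n - g_n$ twos, so, using the already-noted fact that $e'_n = \sum_{1 \le i \le n} b_i$, we get
$$e'_n = \sum_{1 \le i \le n} b_i = g_n + 2(n - g_n) = 2n - g_n,$$
whence $g_n = 2n - e'_n$ and, equivalently, $3g_n - 2n = 4n - 3e'_n$. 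In particular $0 \le g_n \le n$, because $n \le e'_n \le 2n$. This identity is the heart of the argument: counting the $1$'s in a prefix of an automatic sequence is in general not a synchronized operation, but here it is expressed purely in terms of $e'_n$, which we can already compute.

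With the identity in hand I would build a synchronized automaton \texttt{g} for $g_n$ straight from \texttt{ep}, writing the subtraction as an addition to stay within $\Enn$ (this is legitimate since $e'_n \le 2n$):
\begin{verbatim}
def g "?lsd_2 Ex $ep(n,x) & z+x=2*n":
\end{verbatim}
The inequality \eqref{star} then becomes a single decidable first-order statement, which \texttt{Walnut} can check:
\begin{verbatim}
eval thm1 "?lsd_2 An,z $g(n,z) => (3*z>=2*n & 3*z<=2*n+4)":
\end{verbatim}
I expect this to return \texttt{TRUE}, establishing $0 \le 3g_n - 2n \le 4$ for all $n$ (including the base case $n = 0$, where $g_0 = e'_0 = 0$).

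The limit is then immediate by elementary estimates: dividing $2n \le 3g_n \le 2n + 4$ by $3n$ yields $\tfrac{2}{3} \le g_n/n \le \tfrac{2}{3} + \tfrac{4}{3n}$, so $g_n/n \to 2/3$ as $n \to \infty$, which is exactly Cloitre's conjecture.

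As for the main obstacle, once the identity $g_n = 2n - e'_n$ is available the remaining work is entirely mechanical verification. The real difficulty therefore lies not in this theorem itself but in the earlier guess-and-verify construction of the synchronized automaton \texttt{ep} for $e'_n$: it is precisely the existence of that automaton that makes the prefix-counting function accessible to \texttt{Walnut}, and without it the reduction above would have nothing to lean on.
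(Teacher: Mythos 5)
Your proof is correct, but it reaches the automaton for $g_n$ by a genuinely different route than the paper does. The paper treats $g_n$ like the other prefix-dependent quantities: it \emph{guesses} a $16$-state synchronized automaton for $g_n$ and then verifies it in {\tt Walnut} by checking the defining recurrence $g_n = g_{n-1} + 1$ when $b_n = 1$ and $g_n = g_{n-1}$ when $b_n = 2$. You instead observe the exact identity $e'_n = \sum_{1 \le i \le n} b_i = g_n + 2(n - g_n) = 2n - g_n$, valid because every $b_i \in \{1,2\}$ (which the paper's {\tt b12} check certifies) and because $e'_n$ is by construction the partial sum of the run lengths $b_i$; this lets you \emph{define} a synchronized automaton for $g_n$ deterministically from the already-verified automaton {\tt ep}, with no new guessing step. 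A spot check against Table~\ref{tab1} confirms $g_n = 2n - e'_n$ (e.g., $n=8$: $2\cdot 8 - 10 = 6 = g_8$). Your approach buys a shorter and more transparent verification---the only guessed object you depend on is {\tt ep}, whose correctness proof is already in the paper---while the paper's approach is more uniform with its treatment of the other sequences and does not require noticing the identity. Both then discharge the inequality \eqref{star} with essentially the same {\tt Walnut} query, and your final step from the inequality to $\lim g_n/n = 2/3$ is the same elementary squeeze. No gaps.
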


\begin{proof}
We adopt a similar strategy as before:  we guess the synchronized automaton
for $g_n$ and then verify it is correct.  It has $16$ states
and is displayed in
Figure~\ref{fig6}.
\begin{figure}[htb]
\begin{center}
\includegraphics[width=6.5in]{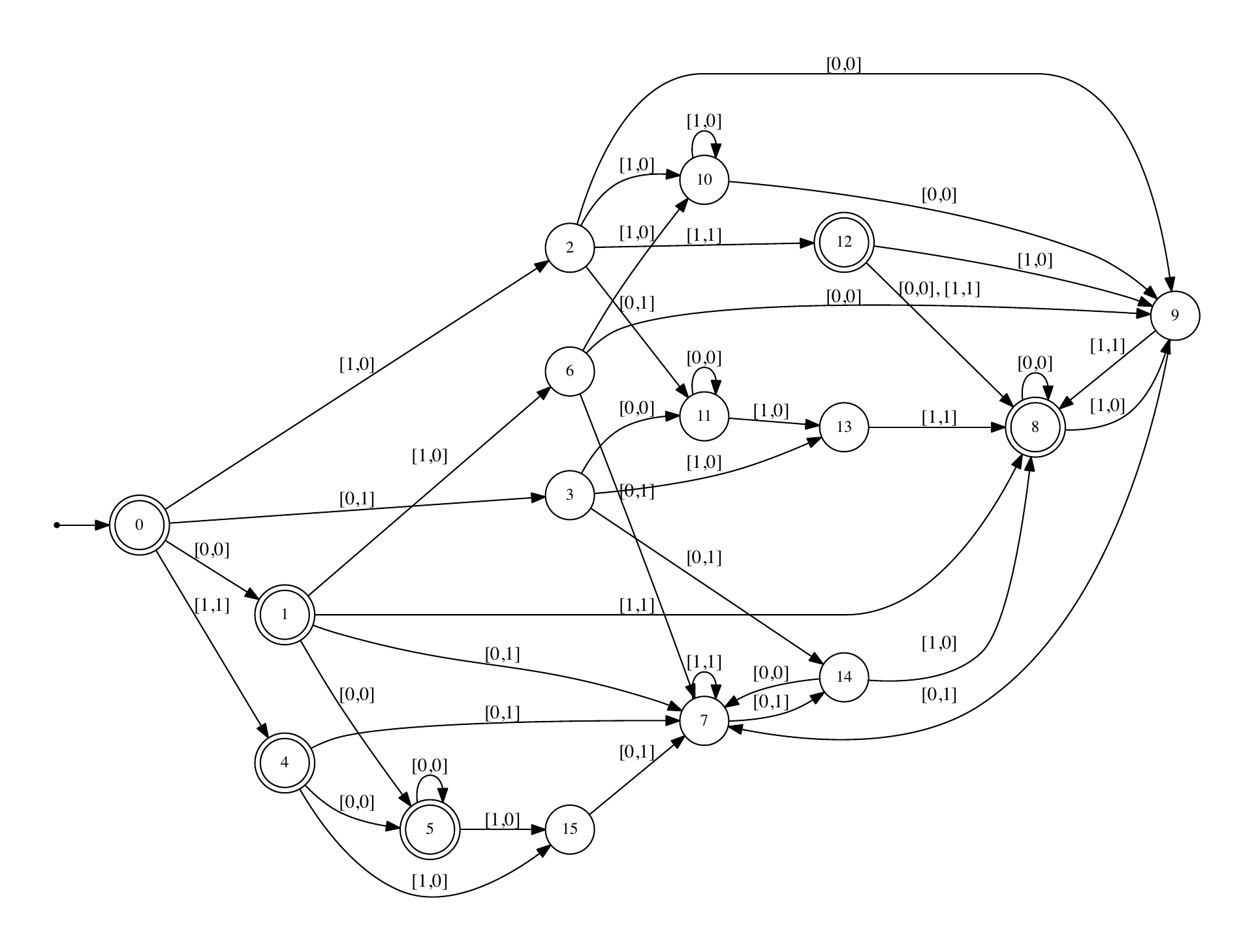}
\end{center} 
\caption{The synchronized automaton for $(g_n)$.}
\label{fig6}
\end{figure} 
To verify it is correct, we need to check that
$g_n = g_{n-1}+1$ if $b_n = 1$, and
$g_n = g_{n-1}$ if $b_n = 2$. We can do that as follows:
\begin{verbatim}
eval g_correctness "?lsd_2 $g(0,0) & 
   An (n>=1) => (Ex ($g(n,x) & $g(n-1,x)) <=> $b(n,2))":
\end{verbatim}

Finally, we can verify the inequality \eqref{star} as follows:
\begin{verbatim}
eval inequality_check "?lsd_2 An,x $g(n,x) => (3*x>=2*n & 3*x<=2*n+4)":
\end{verbatim}
And {\tt Walnut} returns {\tt TRUE}. This completes the proof.
\end{proof}

\section{Another result}

Benoit Cloitre (personal communication) conjectured that if we define
$$w_n = \min \{t \geq 1 \, : \, e'_t \geq n \},$$
then $(w_n)_{n \geq 1}$ is sequence \seqnum{A091960} in the OEIS.

Using his definition,
we can define a synchronized automaton for $w_n$ as follows:
\begin{verbatim}
def w "?lsd_2 Ex $ep(t,x) & x>=n & Au,y ($ep(u,y) & y>=n) => u>=t":
\end{verbatim}
Once we have it, we can verify that indeed $w_n$ satisfies
the construction rules given in the OEIS entry for \seqnum{A091960},
namely, that $w_1 = 1$ and $w_{2n} = w_{2n-1} + (w_n \bmod 2)$,
and $w_{2n+1} = w_{2n} + 1$ for $n \geq 1$.
\begin{verbatim}
def mod2 "?lsd_2 n=z+2*(n/2)":
eval w_check "?lsd_2 $w(1,1) & 
   (An,x,y,z,t (n>=1 & $w(2*n,x) & $w(2*n-1,y) & $w(n,z)
   & $mod2(z,t)) => x=y+t) &
   (An,x,y (n>=1 & $w(2*n+1,x) & $w(2*n,y)) => x=y+1)":
\end{verbatim}
And {\tt Walnut} returns {\tt TRUE}.

\section{Going further}
\label{further}

As in \cite{Goc&Mousavi&Schaeffer&Shallit:2015,Shallit:2023}, there is a single finite automaton that can compute
the values of any finite paperfolding sequence.  The input is the sequence
of unfolding instructions $f$ and an integer $n$, and the result is the
$n$'th term of the associated paperfolding sequence.

Using this automaton, we can find the analogues of the sequences
of Table~\ref{tab1} more generally; there is a {\it single\/} finite automaton
that computes the sequence for {\it all\/}
finite sequences of folding instructions
simultaneously.
Once again, the method it involves guessing some of the automata and using
{\tt Walnut} to verify their correctness; then computing others from these.

Although there is no obvious analogue of the self-generating property for an
arbitrary paperfolding sequence, all of the other results hold for an
arbitrary paperfolding sequence, including Theorem~\ref{thm1}.

\section{Acknowledgments}

I thank Benoit Cloitre for sharing his conjecture about $e'_n$, $w_n$,
and \seqnum{A091960}.  Thanks once again to the OEIS for providing a great
resource.

\end{document}